\def\leaderfill{\leaders\hbox to .8em{\hss .\hss}\hfill}
\def\_#1{{\lower 0.7ex\hbox{}}_{#1}}
\newtheorem{theorem}{Theorem}
\newtheorem{Theorem}{Theorem}[section]
\newtheorem{Proposition}[Theorem]{Proposition}
\newtheorem{Lemma}[Theorem]{Lemma}
\newtheorem{Claim}[Theorem]{Claim}
\theoremstyle{Definition}
\newtheorem{Definition}[Theorem]{Definition}
\theoremstyle{Remark}
\newtheorem{Remark}[Theorem]{Remark}
\def\leaderfill{\leaders\hbox to .8em{\hss .\hss}\hfill}
\def\_#1{{\lower 0.7ex\hbox{}}_{#1}}
\def\fa{{\mathcal{F}}}
\def\ov{\overline}
\def\bc{{\mathbb{C}}}
\def\Re{\operatorname{{Re}}}
\def\mI{\operatorname{{Im}}}
\def\hot{\operatorname{h.o.t.}}
\def\hot{\operatorname{h.o.t.}}
\def\Diff{\operatorname{{Diff}}}
\def\sing{\operatorname{{sing}}}
\title[On real  center singularities of complex vector fields on surfaces]{On real  center singularities of complex vector fields on surfaces}
\author{V. Le\'on and B. Sc\'ardua}
\address{V. Le\'on. ILACVN - CICN, Universidade Federal da Integração Latino-Americana, Parque tecnológico de Itaipu, Foz do Iguaçu-PR, 85867-970 - Brazil}
\email{victor.leon@unila.edu.br}
\address{B. Sc\'ardua. Instituto de Matem\'atica - Universidade Federal do Rio de Janeiro,
CP. 68530-Rio de Janeiro-RJ, 21945-970 - Brazil}
\email{bruno.scardua@gmail.com}
\keywords{foliation; center singularity; first integral; integrable form; Reeb theorem.}
\date{}
\begin{document}

\maketitle

\begin{abstract}One of the various versions of the classical Lyapunov-Poincaré center theorem states that a nondegenerate real analytic center type planar vector field singularity admits an analytic first integral. In a more proof of this result, R. Moussu establishes  important  connection between this result and the theory of singularities of holomorphic foliations (\cite{moussu}).
In this paper we consider generalizations for two main frameworks:
(i)  planar  real analytic vector fields with ``many'' periodic orbits near the singularity  and (ii) germs of holomorphic foliations having a suitable singularity in dimension two.

In this paper we prove versions of Poincar\'e-Lyapunov center theorem, including for the case of holomorphic vector fields. We also give some applications, hinting that there is much more to be explored in this framework.
\end{abstract}
\tableofcontents

\section{Introduction and main results}
\label{section:introduction}
An isolated singularity of a smooth vector field $X$ in dimension two is called a {\it center} if
it has a neighborhood consisting only of the singularity and periodic orbits (diffeomorphic to the
circle $S^1$). The classical Poincaré-Lyapunov center theorem reads as follows:

\begin{Theorem}[\cite{Lyapunov,Poincare}]
\label{Theorem:centertheorem} Consider a real analytic vector field $X(x,y)= P(x,y)\frac{\partial}{\partial x} + Q(x,y) \frac{\partial}{\partial y}$ defined in a  neighborhood of the  origin $O\in \mathbb{R}^2$, having a nonsingular linear part $DX(O)$ and a center at the origin. Then $X$ admits a real analytic first integral
 of Morse type in a neighborhood of  the origin.

\end{Theorem}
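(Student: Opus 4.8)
The plan is to follow Moussu's strategy: complexify $X$ and read the center condition through the holonomy of the associated singular holomorphic foliation. First I would normalize the linear part. Since $O$ is a center and $DX(O)$ is invertible, its eigenvalues can be neither real (a real eigenvalue would give, by invariant manifold theory, an orbit through $O$ asymptotic to $O$, hence not periodic) nor of nonzero real part (else $O$ would be a focus); so they are $\pm i\om$ with $\om>0$, and after a real linear change of coordinates and a time rescaling we may assume $DX(O)=\om(x\,\partial_y-y\,\partial_x)$. Extending $P,Q$ holomorphically to a polydisc $\Delta\subset\bc^2$ and passing to the induced foliation $\fa$, the origin becomes a reduced (in particular non-dicritical) singularity with eigenvalue ratio $-1$, carrying two smooth transverse separatrices $S_1,S_2$ along the eigendirections; in the coordinates $z=x+iy$, $w=x-iy$ (so $\re^2=\{w=\bar z\}$) the linear part of $\fa$ is $d(zw)=0$, and the anti-holomorphic involution $\sigma(x,y)=(\bar x,\bar y)$ preserves $\fa$, fixes $\re^2$, and exchanges $S_1$ and $S_2$.

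Next I would blow up the origin, $\pi:M\to\bc^2$, so that the exceptional divisor $D=\pi^{-1}(O)\cong\bp^1$ is a leaf of $\widetilde\fa=\pi^{*}\fa$ carrying exactly two singularities $p_+,p_-$, exchanged by the lift of $\sigma$ and each with eigenvalue ratio $-\tfrac12$ (hence again reduced); since $p_\pm$ are not real, the real projective line $\re\bp^1\subset D$ consists of regular points of the induced real foliation $\widetilde\fa_{\re}$ on the real blow-up of $\re^2$ at $O$ (a M\"obius band with core $\re\bp^1$). The main obstacle is to convert the center hypothesis into finiteness of a holonomy, and here is how I would do it: because $O$ is a center, every nearby real leaf of $\widetilde\fa_{\re}$ is a circle, and since the inclusion of the complement of the core into the M\"obius band multiplies $\pi_1$ by $2$, each such leaf is freely homotopic to $2\,[\re\bp^1]$; therefore transporting a point of a real transversal twice around $\re\bp^1$ returns it to itself, i.e.\ the holonomy of $\widetilde\fa$ along $\re\bp^1$ restricts to an involution of the totally real transversal arc. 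A germ in $\Diff(\bc,0)$ that is an involution along a totally real arc through $0$ is an involution, hence periodic; and since $\re\bp^1$ is freely homotopic in $D\setminus\{p_+,p_-\}$ to a small loop around $p_+$, the local holonomy of $\widetilde\fa$ along $D$ at $p_+$ (and, by $\sigma$-symmetry, at $p_-$) is periodic of order dividing $2$. I expect this step — turning the real-dynamical center condition into finiteness of a complex holonomy — to be the crux of the whole argument.

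Then I would invoke the Mattei--Moussu theory of germs of holomorphic foliations: at the reduced singularity $p_+$, finiteness of the holonomy of the separatrix $D$ forces $\widetilde\fa$ to admit a local holomorphic first integral, of the form $u^2v$ in suitable coordinates with $\{u=0\}=D$; likewise at $p_-$, and near regular points of $D$ one has local submersive first integrals with $D$ as a fibre. Because the holonomy of $D$ is finite, these glue, by the Mattei--Moussu argument, into a holomorphic first integral on a neighbourhood of $D$, which is constant on the compact leaf $D$ and therefore descends to a holomorphic first integral $F$ of $\fa$ near $O$ with $F(O)=0$ and $dF(O)=0$. The local model $u^2v$ at $p_+$ shows $F$ vanishes along $D$ to order exactly $2$, hence $\mathrm{ord}_O F=2$; thus the $2$-jet of $F$, being a first integral of $d(zw)=0$, equals $\beta\,zw$ with $\beta\neq 0$, in particular it is a nondegenerate quadratic form.

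Finally I would restrict to the real domain. Since $F$ is a first integral of $\fa$ and $X$ is tangent to $\fa$, the restriction $F|_{\re^2}$ is a $\bc$-valued real analytic first integral of $X$, and hence so are its real and imaginary parts. On $\re^2=\{w=\bar z\}$ one has $zw=x^2+y^2$, so $F|_{\re^2}=\beta\,(x^2+y^2)+(\text{higher order})$; as $\beta\neq 0$, at least one of the real or imaginary part of $F|_{\re^2}$ is a real analytic first integral of $X$ with $2$-jet $a\,(x^2+y^2)$ for some $a\in\re\setminus\{0\}$, hence with a nondegenerate critical point at $O$. By the real-analytic Morse lemma it is of Morse type, which proves the theorem. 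Besides the holonomy step, the points that will need care are the precise invocation of Mattei--Moussu (including the gluing along $D$) and the verification that the resulting first integral is genuinely of Morse type, i.e.\ that the first integral on the blow-up vanishes along $D$ to order exactly two.
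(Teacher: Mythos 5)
Your proof is correct and follows essentially the same route the paper relies on: the paper states this theorem as a classical result of Lyapunov--Poincar\'e and points to Moussu's geometric proof, and your complexification/blow-up/holonomy argument (the equatorial holonomy forced to be an involution via the identity principle, then Mattei--Moussu giving a holomorphic first integral, then restriction to $\mathbb{R}^2$ and the Morse lemma) is precisely the argument the paper itself reproduces in its Proposition~\ref{Proposition:orbitsperiodic}. No gaps; your additional care about the order of vanishing along the exceptional divisor, which guarantees the Morse-type conclusion, is a point the paper treats more briefly.
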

There are some equivalent statements  in terms of differential one-forms (\cite{leon-scardua}).
A quite geometrical proof was given by Moussu
(\cite{moussu}). In his paper he makes use of the complexification of the 1-form,
obtaining therefore a holomorphic 1-form with a suitable singularity at the origin $O\in \mathbb C^2$. To this complex 1-form it is applied  Mattei-Moussu theorem (\cite{mattei-moussu}) which  assures the existence of a holomorphic first integral near the singular point (\cite{mattei-moussu}, Theorem~B page 473).  Moussu's ideas are quite attractive and inspiring.
They also show the interplay between real analytic dynamical systems and the
geometric theory of holomorphic foliations.
In this paper we address problems motivated by the above statements, but relaxing the hypothesis on the existence of a neighborhood consisting only of period orbits.
In order to compensate this loss we require a resonance condition on the linear part of the vector field, as follows. We shall say that a 2x2 real matrix $A$ {\it generates a rotation} if all the orbits of the one-parameter group $exp(tA), t \in \mathbb R$, outside the origin of $\mathbb R^2$, are periodic and nontrivial. This corresponds to the fact that the eigenvalues of $A$ are purely imaginary of the form $\pm i \omega$ where $i ^2 =-1$. Then, up to multiplication of $X$ by a constant, we may assume that these eigenvalues are $\pm i$.

Our first result in this direction reads as follows:

\begin{theorem}
\label{Theorem:real}
Let $X$ be an analytic vector field in a neighborhood $U$ of the origin $O\in \mathbb R^2$. Assume that $X(O)=0$ and $DX(O)$ generates a rotation. Then  the following conditions are equivalent:

\begin{enumerate}[(i)]
\item The origin is a center singularity for $X$.
\item There is a sequence of periodic orbits $\gamma_\nu\subset U \setminus \{O\}, \nu \in \mathbb N$, of $X$ such that
     $\gamma_\nu \to O$ as $\nu \to \infty$ (in the sense of the Hausdorff topology).

\end{enumerate}

In case any of the conditions above is verified we conclude that $X$ admits an analytic first integral in the strong sense in a neighborhood of the origin.

\end{theorem}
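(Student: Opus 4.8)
The implication (i)$\Rightarrow$(ii) is trivial, since a center singularity has a whole punctured neighborhood of periodic orbits, from which one extracts a sequence shrinking to $O$. The content is (ii)$\Rightarrow$(i), together with the conclusion about the analytic first integral. The plan is to follow Moussu's strategy: complexify and translate the problem into the realm of holomorphic foliations, then apply the Mattei--Moussu theorem. Write $\omega = P\,dy - Q\,dx$ for the dual $1$-form of $X$; it is real analytic near $O\in\re^2$ and its complexification $\widetilde\omega$ is a holomorphic $1$-form near $O\in\bc^2$ defining a germ of holomorphic foliation $\fa$. Since $DX(O)$ generates a rotation, after rescaling the linear part of $\widetilde\omega$ has eigenvalues $\pm i$, i.e.\ the singularity is linearizable-type in the sense that the characteristic numbers are $i$ and $-i$, a nonresonant (Siegel, but with ratio $-1$) ratio; in particular the singularity of $\fa$ is a nondegenerate singularity with two distinct eigendirections and the quotient of eigenvalues equal to $-1\notin\bq_{>0}$.

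The key step is to show that the hypothesis (ii) forces the holonomy of (one of the) separatrices of $\fa$ to be periodic, hence finite, hence (by the Mattei--Moussu linearization of finite-order holonomy, or directly by the Mattei--Moussu theorem) that $\fa$ has a holomorphic first integral. Concretely, I would argue as follows. The two separatrices $S_1, S_2$ of $\fa$ through $O$ are the complexifications of nothing real (the real part of $X$ has no real separatrix, being a rotation), so $S_1$ and $S_2$ are complex conjugate smooth curves meeting transversally at $O$; fix a small transversal disc $\Sigma$ to $S_1$ and the holonomy map $h\colon(\Sigma,0)\to(\Sigma,0)$, a germ of holomorphic diffeomorphism with $h'(0)=\exp(2\pi i\cdot(-1))=$ a root of unity (more precisely $e^{\mp 2\pi i}$, i.e.\ $h'(0)=1$; one must be slightly careful and instead look at the holonomy around $S_1$ whose linear part is $e^{2\pi i \la}$ where $\la = $ ratio of eigenvalues $=-1$, so $h'(0)=1$). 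The real periodic orbits $\ga_\nu$ of $X$, lying in $\re^2\subset\bc^2$, trace out on $\Sigma$ a sequence of points accumulating at $0$ that are \emph{fixed} by $h$ (or by a suitable iterate / by the real structure): each $\ga_\nu$ is a closed leaf of $\fa$, so the holonomy along it is trivial. Thus $h$ has a sequence of fixed points accumulating at $0$, which by the identity principle forces $h=\Id$. A foliation whose separatrix has trivial holonomy and whose other separatrix likewise (by conjugation / by the same argument) has trivial holonomy is, by Mattei--Moussu, analytically linearizable and admits a holomorphic first integral of the form $f = u\cdot v$ in suitable coordinates where $\{u=0\}=S_1$, $\{v=0\}=S_2$; more directly, the Mattei--Moussu theorem (\cite{mattei-moussu}, Theorem~B) applies because the leaves near $O$ are closed (the union of the $\ga_\nu$ and their saturate forces local closedness of all nearby leaves), giving a holomorphic first integral $F$ for $\fa$.

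Finally I would descend back to the real setting. The holomorphic first integral $F$ near $O\in\bc^2$ can be chosen invariant under the real-structure conjugation $\sigma(z,w)=(\bar z,\bar w)$ (replace $F$ by $F\cdot(\overline{F\circ\sigma})$, or average appropriately, using that the foliation is $\sigma$-invariant), so that its restriction to the real plane $\Fix(\sigma)=\re^2$ gives a real analytic first integral $f$ for $X$ near $O$. The condition $DX(O)\neq 0$ generating a rotation guarantees that the quadratic part of $f$ is definite, so $f$ is of Morse type; the level curves $\{f=c\}$ for small $c$ are then ovals, i.e.\ periodic orbits of $X$, proving that $O$ is a center, and simultaneously exhibiting the analytic first integral ``in the strong sense.'' The main obstacle, and the step deserving the most care, is the passage ``sequence of periodic real orbits $\Rightarrow$ trivial holonomy / holomorphic first integral'': one must verify that each real periodic orbit genuinely corresponds to a closed leaf of the complex foliation whose holonomy representative on the fixed transversal $\Sigma$ is a fixed point of $h$ (keeping track of which separatrix, and of the precise value of $h'(0)$ given the eigenvalue ratio $-1$), and that these fixed points really do accumulate at $0\in\Sigma$; once that is in place, the identity principle and the Mattei--Moussu theorem do the rest.
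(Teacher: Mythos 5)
Your proposal is correct and follows essentially the same route as the paper: complexify, show that the accumulating real periodic orbits force a holonomy germ to have fixed/periodic points accumulating at the origin, conclude by the identity principle that this holonomy is periodic, invoke Mattei--Moussu to get a holomorphic first integral, and descend to a real Morse-type first integral (Lemma~\ref{Lemma:complexificationfirstintegral} plus the Morse lemma). The only technical divergence is that the paper (in Proposition~\ref{Proposition:orbitsperiodic}) performs a quadratic blow-up and reads the real periodic orbits as period-two points of the first-return map of the equator in the lifted Moebius band, i.e.\ of the projective holonomy of the exceptional divisor, whereas you work directly with the separatrix holonomy; both versions hinge on the identity-principle argument and on the same delicate step you correctly single out, namely that a closed real orbit yields a periodic point of the relevant holonomy on a fixed transversal.
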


Let us state a notion that shall be useful.

\begin{Definition}
{\rm Given a vector field $X$ in a neighborhood $U$  of the
origin $O\in \mathbb R^2$ we shall say that $X$ admits a transverse segment if there is a continuous injective map  $\phi \colon [0, \epsilon) \to \mathbb R^2$ such that: (i)  $\phi(0)=0$; (ii) $\phi\big|_{(0,\epsilon)}\colon (0,\epsilon) \to \mathbb R^2$ is a smooth immersion and (iii)
$\phi$ is transverse  to $X$ at any point off the origin.  In this case we shall simply say that $\Sigma=\phi(0,\epsilon)$ is a transverse segment to $X$. Given a point $p\in  \Sigma$ it has a {\it bounded order} by $k \in \mathbb N$ if the corresponding trajectory $\gamma_p$ of $X$ satisfies
$\sharp (\gamma_p \cap \Sigma)\leq k$.
}
\end{Definition}
This is clearly the case when we have a center type singularity. There are other examples: take a cusp singularity $x^2 - y ^3 =cte$ and the hamiltonian $X=3 y^2 \frac{\partial}{\partial x} + 2x \frac{\partial}{\partial y}$ and consider the vertical axes.

\vglue.1in
In the course of the proof of Theorem~\ref{Theorem:real} we shall obtain (cf. Lemma~\ref{Proposition:orbitsperiodic}):
\begin{theorem}
\label{Theorem:boundedorder}
Let $X$ be an analytic vector field in a neighborhood $U$ of the origin $O\in \mathbb R^2$. Assume that $X(0)=0$ and $DX(O)$ generates a rotation. Assume  that there is a sequence of  orbits $\gamma_\nu\subset U\setminus \{O\}$ of $X$   such that:
\begin{enumerate}
\item $\gamma_\nu$ has bounded order by $k\in \mathbb N$ with respect to some transverse segment $\Sigma$.
\item  $\gamma_\nu \to O$ as $\nu \to \infty$.
\end{enumerate}
Then  $X$ has a center type singularity at the origin. In particular,  $X$ admits a real analytic first integral in a neighborhood of the origin.
\end{theorem}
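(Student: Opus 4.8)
The plan is to reduce Theorem~\ref{Theorem:boundedorder} to Theorem~\ref{Theorem:real} by showing that the hypotheses (bounded order $k$ plus accumulation at $O$) force condition (ii) of Theorem~\ref{Theorem:real}, i.e. the existence of a sequence of genuinely \emph{periodic} orbits shrinking to the origin. The starting point is that since $DX(O)$ generates a rotation (eigenvalues $\pm i$ after normalization), the origin is a focus or a center in the topological sense: there is a transverse segment $\Sigma$ through $O$ on which the first-return (Poincaré) map $\mathcal{P}$ is well defined on a punctured half-neighborhood $\Sigma \setminus \{O\}$, is analytic there, and extends continuously by $\mathcal{P}(O)=O$. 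The whole game is then played with the one-variable analytic map $\mathcal{P}$.

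First I would set up $\Sigma$ and $\mathcal{P}$ carefully, parametrizing $\Sigma$ by $t\in[0,\epsilon)$ with $t=0\leftrightarrow O$; the rotation hypothesis guarantees that orbits starting near $O$ wind around and return, so $\mathcal{P}(t)$ is defined for small $t>0$. Next, I use hypothesis (1): each $\gamma_\nu$ meets $\Sigma$ at most $k$ times, which means that along $\gamma_\nu$ the iterates $t, \mathcal{P}(t), \mathcal{P}^2(t),\dots$ terminate after at most $k$ steps — but an orbit that returns to $\Sigma$ and is not closed keeps returning, so ``bounded order $k$'' on an orbit that \emph{does} return at all forces, after at most $k-1$ returns, that some iterate $\mathcal{P}^j(t_\nu)=t_\nu$, i.e. $\gamma_\nu$ is in fact a closed orbit (a fixed point of some iterate $\mathcal{P}^{m_\nu}$, $m_\nu\le k$). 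Combined with (2), $\gamma_\nu\to O$, this yields points $t_\nu\to 0^+$ with $\mathcal{P}^{m_\nu}(t_\nu)=t_\nu$ and $m_\nu\le k$. By pigeonhole, infinitely many $m_\nu$ coincide with a single $m\le k$, so the analytic map $\mathcal{P}^m$ has a sequence of fixed points accumulating at $t=0$; since $\mathcal{P}^m$ is analytic in one real variable (hence its difference with the identity is analytic), having infinitely many zeros accumulating at an interior boundary point of its domain forces $\mathcal{P}^m \equiv \mathrm{Id}$ near $0$.

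Once $\mathcal{P}^m=\mathrm{Id}$ near $0$, every orbit through a point of $\Sigma$ near $O$ is periodic, so $O$ is a center singularity for $X$; this is exactly condition (i) of Theorem~\ref{Theorem:real}. Applying Theorem~\ref{Theorem:real} then gives the analytic first integral in the strong sense, which is the conclusion. I would phrase the last step as: ``Thus condition (ii) — and in fact (i) — of Theorem~\ref{Theorem:real} holds, and the conclusion follows.''

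The main obstacle I anticipate is the bookkeeping in the middle step: making rigorous the passage from ``$\gamma_\nu$ has bounded order'' to ``$\gamma_\nu$ is closed, i.e.\ a periodic point of $\mathcal{P}$ of period $\le k$.'' One must rule out the degenerate possibilities that a $\gamma_\nu$ simply never returns to $\Sigma$ (handled by the rotation hypothesis, which forces a return for orbits near $O$, possibly after shrinking $U$), or that $\gamma_\nu$ spirals and meets $\Sigma$ fewer than $k$ times only because it escapes $U$ — so one needs the accumulation $\gamma_\nu\to O$ to confine the relevant orbits to a region where $\mathcal{P}$ and its iterates up to order $k$ are all defined. The identity-principle step is then clean, but care is needed because $\mathcal{P}$ is only \emph{one-sidedly} defined on $[0,\epsilon)$; the standard fix is that $\mathcal{P}$ is the restriction of an analytic (indeed holomorphic, via the complexification used by Moussu) map, so ``infinitely many fixed points accumulating at $0$'' still forces $\mathcal{P}^m-\mathrm{Id}\equiv 0$. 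I would invoke Lemma~\ref{Proposition:orbitsperiodic} (referenced in the excerpt) for precisely this periodicity-propagation statement.
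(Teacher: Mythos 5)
Your proof is correct, but it takes a genuinely different route from the paper's for the one step that is actually new relative to Theorem~\ref{Theorem:real}. The paper disposes of Theorem~\ref{Theorem:boundedorder} in three lines: it reruns the proof of Proposition~\ref{Proposition:orbitsperiodic} (complexify, blow up, pass to the holonomy of the exceptional divisor) and substitutes Lemma~\ref{Lemma:diffeomorphismclosed} for Lemma~\ref{Lemma:diffeomorphism}; there the bounded-order hypothesis is transported to the complexified holonomy germ as ``finite pseudo-orbits of uniformly bounded order,'' and Camacho's topological description of tangent-to-the-identity germs is what rules out nontrivial holonomy. You instead stay entirely in the real picture for this step: once $\gamma_\nu\to O$ confines the orbit to a region where the first-return map $\mathcal{P}$ and its iterates are defined, the finite set $\gamma_\nu\cap\Sigma$ is invariant under the injective map $\mathcal{P}$, hence is permuted by it, so each $\gamma_\nu$ is forced to be a genuinely closed orbit (equivalently: if $\mathcal{P}(t_0)\neq t_0$ the iterates are strictly monotone and hence infinite in number). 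This places you squarely in hypothesis (ii) of Theorem~\ref{Theorem:real}, which you then quote. Your route is more elementary --- no appeal to Camacho's theorem or to pseudo-orbits of holomorphic germs is needed --- and it cleanly exhibits Theorem~\ref{Theorem:boundedorder} as a corollary of Theorem~\ref{Theorem:real} rather than as a parallel rerun of its proof; the paper's route, on the other hand, is the one that generalizes when the real return map is not available. The one point to watch, which you correctly flag yourself, is that your optional direct argument via the identity principle for $\mathcal{P}^m-\mathrm{Id}$ requires analyticity of the return map \emph{at} $t=0$ (zeros accumulating at a boundary point of the domain of a map analytic only on $(0,\epsilon)$ prove nothing); since you ultimately fall back on Theorem~\ref{Theorem:real} and Proposition~\ref{Proposition:orbitsperiodic}, where this issue is handled through the complexified holonomy, this does not create a gap in your argument.
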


\subsection{Complex analytic foliations and real singularities}
In what follows, by a {\it germ of a holomorphic foliation at the origin $O\in \mathbb C^2$} we shall mean
a germ of a holomorphic foliation by curves, with an isolated singularity at the origin $0 \in \mathbb C^2$.
Two irreducible and reduced germs $f, g \in \mathcal O_2$ with $f(0)=g(0)=0$ are {\it in general position} if the analytic curves $(f=0)$ and $(g=0)$ meet transversely at the origin.
With this notion we have the following variant of Theorem~\ref{Theorem:real}:

\begin{theorem}
\label{Theorem:complex}
Let $\fa$ be a germ of a {\rm(}Siegel resonant type{\rm)} holomorphic foliation at the origin $0 \in \mathbb C^2$
 given by $\omega=0$ where  $\omega=xdy + ydx  + {\tilde \omega}$, where ${\tilde \omega}$ has jet of order one equal to zero.
Then the following conditions are equivalent:

\begin{enumerate}[{(i)}]

\item $\fa$ admits a holomorphic first integral of the form $fg$ for irreducible germs $f, g \in \mathcal O_2$ in general position.

\item There is a germ of an analytic dimension two variety $V^2\subset \mathbb C^2$ with $0 \in V^2$, having contact order one with  $\fa$ outside of the origin and
such that the restriction of $\fa$ to $V^2$ admits a sequence of compact leaves $L_\nu\subset V^2\setminus \{O\}$ such that $L_\nu \to O$ as $\nu \to \infty$.

\end{enumerate}

\end{theorem}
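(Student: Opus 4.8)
The plan is to prove $(\mathrm{i})\Rightarrow(\mathrm{ii})$ by an explicit construction and $(\mathrm{ii})\Rightarrow(\mathrm{i})$ by forcing the holonomy of a separatrix to be trivial and then quoting the analytic classification of reduced resonant singularities. Two preliminary remarks organize everything. Since the $1$--jet of $\om$ equals $x\,dy+y\,dx=d(xy)$, the foliation $\fa$ is already a reduced singularity, namely a resonant saddle whose linear part has eigenvalues $\pm1$; it thus has two smooth separatrices $S_1,S_2$, transverse at $O$ and tangent to the eigendirections, and the holonomy $h_1$ of $S_1$ along a generator of $\pi_1(S_1\setminus\{O\})$ has multiplier $h_1'(0)=e^{-2\pi i}=1$, i.e.\ $h_1$ is parabolic. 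Moreover $(\mathrm{i})$ is equivalent to $\fa$ being analytically linearizable: a holomorphic first integral $fg$ with $f,g$ in general position has level sets $\{fg=c\}$, $c\neq0$, smooth and connected, so after straightening the pair of curves $(f=0),(g=0)$ the foliation becomes $d(xy)=0$.

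For $(\mathrm{i})\Rightarrow(\mathrm{ii})$ I would argue directly in the coordinates where $\fa=\{d(xy)=0\}$. Take $V^2$ to be the germ at $O$ of the totally real analytic surface $\{(x,\bar x)\colon|x|<\ve\}$. On $V^2$ one has $xy=x\bar x=|x|^2\in\re_{\ge0}$, hence the traces on $V^2$ of the leaves of $\fa$ are the circles $\{|x|^2=c\}$ for $c>0$, together with $\{O\}$; in particular $\fa|_{V^2}$ carries the sequence of compact leaves $L_\nu=\{|x|=1/\nu\}$ with $L_\nu\to O$. A short computation with the tangent planes gives $\dim_\re(T_pV^2\cap T_p\fa)=1$ for every $p\in V^2\setminus\{O\}$, which is the contact-order-one condition.

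For $(\mathrm{ii})\Rightarrow(\mathrm{i})$ the idea is to read the real picture on $V^2$ through the holonomy $h_1$. Contact order one makes $V^2\setminus\{O\}$ a real-analytically foliated surface, the leaves of $\fa|_{V^2}$ being the connected components of $\hat L\cap V^2$ as $\hat L$ ranges over the leaves of $\fa$; so each $L_\nu$ is a circle inside a leaf $\hat L_\nu$ of $\fa$, and since $L_\nu\to O$ the leaves $\hat L_\nu$ accumulate on $S_1\cup S_2$. After discarding the at most two indices for which $\hat L_\nu$ is a separatrix, $\hat L_\nu$ has annular type, $\pi_1(\hat L_\nu)\cong\bz$, and $L_\nu$ represents a class $m_\nu\in\bz$ which is nonzero (a compact leaf of $\fa|_{V^2}$ accumulating on $O$ cannot bound a disc in its leaf). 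Fix a small complex transversal $\Sigma$ to $\fa$ at a point $q_1\in S_1\setminus\{O\}$; for $\nu$ large $\hat L_\nu\cap\Sigma$ is a finite set of points close to $q_1$, and following the closed loop $L_\nu$ once (it winds $m_\nu$ times around $S_1$) shows that the holonomy of $\fa$ read on $\Sigma$, which is $h_1^{m_\nu}$, fixes one such point $p_\nu\neq q_1$, with $p_\nu\to q_1$. If $h_1\neq\Id$ then, being a nontrivial parabolic germ, by the Leau--Fatou flower theorem it has a punctured neighbourhood of its fixed point free of periodic points, contradicting $h_1^{m_\nu}(p_\nu)=p_\nu$ with $p_\nu\to q_1$, $p_\nu\neq q_1$, $m_\nu\neq0$. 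Hence $h_1=\Id$; by the Mattei--Moussu theory of reduced singularities (\cite{mattei-moussu}; this is the mechanism behind \cite{moussu}), a resonant saddle whose separatrix holonomy is trivial is analytically linearizable, so $\fa$ is conjugate to $d(xy)=0$ and therefore admits the holomorphic first integral $fg$ with $f=x$, $g=y$, which are irreducible and in general position.

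The step I expect to be the main obstacle is the bookkeeping in $(\mathrm{ii})\Rightarrow(\mathrm{i})$ that turns the real-analytic data on $V^2$ into the single complex-analytic statement ``$h_1^{m_\nu}$ fixes a point $p_\nu\to q_1$'': one must check carefully that contact order one really makes $\fa|_{V^2}$ a genuine real-analytic foliation off $O$, that each compact leaf $L_\nu$ is homotopically essential in $\hat L_\nu$, and that the return map of $\fa|_{V^2}$ along $L_\nu$ is the restriction to the real curve $V^2\cap\Sigma$ of the corresponding power of $h_1$. Once that correspondence is set up, the parabolicity forced by the resonance $-1$ closes the argument and, exactly as in Theorems~\ref{Theorem:real} and~\ref{Theorem:boundedorder}, no bound on the winding numbers $m_\nu$ is needed.
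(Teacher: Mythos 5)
Your overall strategy is sound and rests on the same mechanism as the paper's proof: compact real leaves accumulating at the origin produce periodic points of a holonomy germ whose linear part is a root of unity, Camacho/Leau--Fatou excludes this unless that holonomy is trivial, and Mattei--Moussu then yields the first integral $fg$. The routes differ in the details. For $(i)\Rightarrow(ii)$ you linearize first and take $V^2=\{y=\bar x\}$, while the paper takes $V^2=(\Re f=\Re g)\cap(\mI f=-\mI g)$ without linearizing; these coincide after straightening, and your observation that (i) is equivalent to analytic linearizability is correct and makes this half cleaner. For $(ii)\Rightarrow(i)$ the paper first straightens $V^2$ to $\mathbb R^2\subset\mathbb C^2$, notes that contact order one forces $\fa$ to be the complexification of its real trace, and then runs Proposition~\ref{Proposition:orbitsperiodic}: quadratic blow-up, the Moebius band over $\mathbb R^2$ meeting the exceptional divisor along its equator, and the projective holonomy generator $h$ (with $h'(0)=-1$) acquiring a sequence of period-two orbits. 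You stay downstairs and use the separatrix holonomy $h_1$, which is tangent to the identity since the eigenvalue ratio is $-1$. Both endings are legitimate.

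The genuine gap is the step you flag but do not actually close: the claim $m_\nu\neq 0$, supported only by the parenthetical ``a compact leaf of $\fa|_{V^2}$ accumulating on $O$ cannot bound a disc in its leaf.'' That is an assertion, not an argument, and it is the crux: if $m_\nu=0$ the relation $h_1^{m_\nu}(p_\nu)=p_\nu$ is vacuous and the implication collapses. What is easy is that $L_\nu$ is essential in $V^2\setminus\{O\}$ (a closed leaf bounding a disc in $V^2\setminus\{O\}$ would give a nonsingular foliation of a closed disc tangent to its boundary, which is impossible); but you need $L_\nu$ essential in the complex leaf, i.e. a nonzero winding of $L_\nu$ around the separatrices $S_1,S_2$, and passing from the first statement to the second requires showing that the generator of $\pi_1(V^2\setminus\{O\})$ maps to a nonzero class in $\pi_1(U\setminus(S_1\cup S_2))$ --- e.g. via the intersection numbers of $V^2$ with $S_1$ and $S_2$ at $O$, which must be proved nonzero from the contact-order-one hypothesis. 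This is precisely what the paper's reduction buys: once $V^2=\mathbb R^2$ and $\fa$ is the complexification of its real trace, a closed orbit of a planar singularity with rotational linear part necessarily encircles $O$, its lift to the Moebius band winds around the equator of the exceptional divisor, and the associated holonomy word is the explicit nontrivial element $h^2$. Either import that reduction or supply the intersection-number computation; with that in place, the remaining bookkeeping (reading the return map of $L_\nu$ on $\Sigma$ as a power of $h_1$) is standard from the Mattei--Moussu structure of a neighborhood of a reduced Siegel singularity, and your Leau--Fatou step is the same as the paper's appeal to Camacho's theorem in Lemma~\ref{Lemma:diffeomorphism}.
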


In the situation of the above theorem we also have:

\begin{itemize}

\item {\it There is   a germ of a totally real analytic variety $V^2\subset \mathbb C^2$ having contact order one with $\fa$ and
such that the restriction of $\fa$ to $V^2$  has a center type singularity at the origin  in $V^2$.}

\end{itemize}

A few words about the notions in the statement of Theorem~\ref{Theorem:complex}.
We recall that a  submanifold $V$ of a complex surface $M$ is called
{\it  totally real}  if the complex structure $J\colon TM \to TM$ of $M$ maps
each tangent space $T_p V\subset T_pM $ of $V$  into  the normal space $(T_p V)^\perp\subset T_p M$.
We refer to \cite{BER} for a detailed exposition about  totally real manifolds.
We mention that   given two germs of holomorphic functions $f, g \colon \mathbb C^2 \to \mathbb C$ in general position
and vanishing at $O\in \mathbb C^2$ then the intersection
$V^2 = (\Re(f)= \Re(g))\cap (\mI(f)=- \mI(g))$ is a germ of a  totally real surface at the origin $0 \in \mathbb C^2$.

In Theorem~\ref{Theorem:complex} above the leaves of $\fa$ are of real dimension two, in a space of real dimension four. Thus, condition (ii) is equivalent to the following:

\begin{itemize}
\item[{\rm (ii)'}] There is   a germ of a totally real analytic surface $V^2\subset \mathbb C^2$ with $0 \in V^2$ and
such that the restriction of $\fa$ admits a sequence of compact leaves $L_\nu\subset V^2\setminus \{O\}$ such that $L_\nu \to O$ as $\nu \to \infty$.
\end{itemize}

Given a real foliation $\fa$ of codimension $k$ in a differentiable manifold $M$ and an immersed connected submanifold $V\subset M$, the {\it contact order} of $\fa$ with $V$ at a point $p\in V$ is the dimension of the intersection $T_p(V)\cap T_p(\fa) \subset T_p(M)$ as linear subspaces of the tangent space $T_p(M)$. We say that $\fa$ has contact order $r$ with $V$ if their contact order is $r$ at each point $p\in V$. In the case where $\fa$ is a holomorphic foliation of (complex) codimension one in an open subset $U\subset \mathbb C^2$ with $\sing(\fa)=\{O\}\subset U$, and $V^2\subset U$ is a
real surface, we have:
\begin{itemize}
\item $V^2$ is transverse to $\fa$ off the origin iff $V^2\setminus\{O\}$ and $\fa$ have contact order equal to zero.

\item $V^2$ is $\fa$ invariant iff $V^2\setminus \{O\}$ and $\fa$ have contact order equal to $2$.

\item $V^2\setminus \{O\}$ has contact order with $\fa$ equal to $1$ iff $V^2\setminus \{O\}$ is a totally real submanifold not invariant by $\fa$.

\end{itemize}

For our next result it is better to state a definition:

\begin{Definition}
{\rm Let $\fa$ be a one-dimension holomorphic foliation with singularities on a Stein surface $N^2$. An isolated  singularity $p\in\sing(\fa)\subset  N$ is said to be  {\it real} if there is a germ at $p$ of an analytic dimension two variety $V^2\subset  N^2$, having contact order one with  $\fa$ outside of $p\in V$ and such that for some neighborhood $U\subset N^2$ of $p$, the restriction  $\fa\big|_{U}$ is the corresponding complexification of the foliation $\fa$ restricted to $V^2$.
We shall refer to the pair $(\fa\big|_{V}, V)$ (or sometimes just to the foliation $\fa\big|_{V}$) as {\it the real model} of (the germ of) $\fa$ at $p$.}
\end{Definition}

\begin{theorem}
\label{Theorem:complete}
Let $Z$ be a holomorphic vector field on a Stein surface $N^2$. Assume that:
\begin{enumerate}[(a)]
\item $Z$ is a complete holomorphic vector field.
\item There is a real singularity $p\in \sing(Z)$ such that the corresponding real model
is a nondegenerate   analytic center.

\end{enumerate}
Then $Z$ has a periodic flow, and admits a holomorphic first integral $f\colon N \to {\ov {\mathbb C}}$.
\end{theorem}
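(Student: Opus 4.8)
The plan is to reduce the global statement to the local analytic picture already produced by Theorem~\ref{Theorem:real} (and its complex incarnation), and then to use the completeness of $Z$ to globalize the local first integral. First I would work near the real singularity $p$. By hypothesis (b) the real model $(Z\big|_V, V)$ is a nondegenerate analytic center, so Theorem~\ref{Theorem:real} (equivalently the classical Poincar\'e--Lyapunov statement, Theorem~\ref{Theorem:centertheorem}) supplies a real analytic first integral of Morse type for $Z\big|_V$ near $p$. Complexifying this Morse first integral along $V$ — exactly the operation implicit in the definition of ``real singularity'' and in Theorem~\ref{Theorem:complex} — yields a holomorphic first integral $f_p$ for the germ of $Z$ at $p$, of the form $f_p = u\cdot v$ with $u,v\in\mathcal O_2$ in general position (a Morse-type germ). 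In particular the germ of $Z$ at $p$ has a nonconstant holomorphic first integral with connected generic fibers, so all leaves of $Z$ near $p$ are compact (they are the fibers of $f_p$), and the local flow of $Z$ near $p$ is periodic.

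Next I would propagate periodicity using completeness. Since $Z$ is complete on $N^2$, it generates a holomorphic action of the additive group $(\mathbb C,+)$ on $N$; the leaves of $Z$ are the orbits of this action. The set of points lying on a compact (hence periodic) orbit is open — periodicity of the orbit through $q$ means the isotropy subgroup $\Lambda_q\subset\mathbb C$ is a nontrivial lattice or a nontrivial discrete subgroup, and for a compact orbit it is a rank-one or rank-two lattice; a standard argument (the period map $q\mapsto$ generator of $\Lambda_q$ is holomorphic where defined, and a leaf close to a compact leaf is compact because it is a finite covering controlled by the holonomy, which is periodic here by Mattei--Moussu-type linearization on the Morse fibration) shows this set is open. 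It is clearly closed in $N\setminus\sing(Z)$ by taking Hausdorff limits of compact orbits, and it is nonempty by the previous paragraph. Since $N$ is connected, every leaf of $Z$ is compact and $Z$ has a periodic flow on all of $N$.

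Finally I would construct the global first integral $f\colon N\to\overline{\mathbb C}$. One natural route: the space of leaves $N/Z$ is, near $p$, identified via $f_p$ with a disk in $\mathbb C$; one shows the quotient is a (possibly non-Hausdorff, but here actually Hausdorff after the periodicity is established) one-dimensional complex analytic space, and since $N$ is Stein and the leaves are compact the leaf space is a Stein curve, hence carries nonconstant holomorphic — and at worst meromorphic after one-point compactification — functions; pulling back gives $f$. Alternatively, and more in the spirit of the paper, I would extend $f_p$ by analytic continuation along leaves: because every leaf is compact and the flow periodic, the local period/holonomy data forces $f_p$ to continue to a single-valued holomorphic function on the saturated neighborhood, and a monodromy argument (trivial holonomy on each Morse fiber) lets this continuation cover all of $N$, possibly taking the value $\infty$ along a divisor, which is why the target is $\overline{\mathbb C}$.

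The main obstacle I expect is the globalization step: passing from ``every leaf is compact and the flow is periodic'' to the existence of a genuine (meromorphic) first integral on a Stein surface. Showing the leaf space is a well-behaved analytic object — in particular Hausdorff and of dimension one, with no accumulation pathologies of the period function near the singular fiber through $p$ or near any other singularities of $Z$ — is the delicate point; one must control the fibers of $f_p$ of the form $(u=0)$ and $(v=0)$ (the two separatrices), whose generic nearby fibers are smooth compact curves, and rule out vanishing cycles degenerating the period. I would handle this by combining the Morse normal form near $p$ with Reeb-stability-type arguments for the compact-leaf foliation and Stein theory (holomorphic convexity, finiteness) on $N$.
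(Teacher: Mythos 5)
Your overall strategy (analyze the germ at $p$, then use completeness to globalize) matches the paper's, but there are concrete problems in the execution. First, the assertion that ``all leaves of $Z$ near $p$ are compact (they are the fibers of $f_p$)'' is false: the local first integral at $p$ is of Morse/Siegel type $f_p=uv$ with $uv$ conjugate to $xy$, and the fibers $xy=c$ in a polydisk are annuli, not compact curves. What is true, and what you should extract, is that the germ of $Z$ at $p$ is analytically linearizable (Mattei--Moussu: a Siegel resonant singularity with a holomorphic first integral is linearizable), so in suitable coordinates $Z=\lambda(x\frac{\partial}{\partial x}-y\frac{\partial}{\partial y})+\dots$ and the \emph{local flow} is periodic in complex time. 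Second, your open--closed argument is not sound as stated: the set of points on compact orbits is not ``clearly closed'' --- compact leaves of a foliation can accumulate on noncompact ones, and controlling this is exactly the hard part you would need to do. The paper sidesteps both issues at once: since $Z$ is complete, the time-$T$ map $\phi_T$ (with $T$ the local period at $p$) is a globally defined automorphism of $N$ which equals the identity on an open set, hence $\phi_T=\mathrm{Id}$ everywhere by the identity principle; the flow is therefore globally periodic with no leafwise compactness argument needed.

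The more serious gap is the final step. You correctly flag the passage from ``periodic flow'' to ``meromorphic first integral on the Stein surface $N$'' as the delicate point, but your sketch (leaf space is a Stein curve, pull back functions, or analytic continuation of $f_p$ along leaves) is not carried out and is precisely where the known difficulties lie (non-Hausdorff leaf spaces, behavior near other singularities and exceptional orbits). The paper does not prove this from scratch: it invokes Suzuki's structure theory for complete holomorphic vector fields on Stein surfaces, namely that outside an invariant set of zero logarithmic capacity all orbits are biholomorphic to a fixed model $\mathbb C$ or $\mathbb C^*$, and that in the $\mathbb C^*$ case (which is forced once the flow is periodic) the foliation admits a meromorphic first integral $f\colon N\to\overline{\mathbb C}$. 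Without citing Suzuki's theorems (or reproving them), your argument does not close; with them, the leaf-space construction you outline becomes unnecessary.
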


\begin{Remark}
{\rm By a {\it nondegenerate} center we mean a center singularity with nonsingular linear part.
\begin{enumerate}[(i)]
\item The conclusion of Theorem~\ref{Theorem:complete} remains valid if instead of assuming
(a) we assume that: $(a)^\prime$ $Z$ is of the form $Z=x\frac{\partial}{\partial y} - y \frac{\partial}{\partial x} + \hot$ in a neighborhood of $p$ and  the real model admits a sequence of compact leaves $L_\nu\subset V^2\setminus \{O\}$ such that $L_\nu \to p$ as $\nu \to \infty$.

\item The case of a complete holomorphic vector field $Z$ in a Stein surface, having a holomorphic first integral and a linearizable singularity of type $x y =constant$ has been described in \cite{camacho-scarduamanuscripta} in full details.

\end{enumerate}
}
\end{Remark}

\section{Basic lemmas}

Let us first state a few lemmas we shall need.
First we recall that given a topological space $X$, a point $p \in X$ and
  $h\colon U \to h(U)\subset X$   a homeomorphism between $U$ and $h(U)$ open subsets of $X$, such that $h(p)=p$,   we can define the {\it pseudo-orbit} of a point $q \in U$ as  the set of all possible iterates $h^n(q)\in U, \, n \in \mathbb Z$.
We shall say that the pseudo-orbit of $q\in U$ is  {\it closed in U} if its a closed subset of $U$ in the classical sense of topology. This means   either of the following. There are only finitely many possible iterates of $q$ or if any point $z \in U$ which is a limit of a sequence of iterates $z=\lim h^{k_j}(q)$ of some point $q\in U$, with $k_j \in \mathbb N$ and $\lim k_j = \infty$ then $z$ belongs to the pseudo-orbit. We shall say that the {\it the orbit of $p$ is periodic} (or, that {\it $p$ is a periodic point}) of period $k\ge 1$, with respect to $h$,  if $f^\ell (p) \in U, \forall \ell \in \mathbb N$,   $f^k(p)=p$ and $f^\ell(p) \ne p, \forall \ell=0,...,p-1$.
Using representatives we shall state  similar notions for germs of homeomorphisms with a fixed point. For the case of a complex diffeomorphism map germ we have:
\begin{Lemma}
\label{Lemma:diffeomorphism}
Let $f\in \Diff(\mathbb C,0)$ be a germ of a holomorphic diffeomorphism
such that:
\begin{enumerate}
\item $f^\prime(0)\in \mathbb C$ is a root of the unit.
\item There is a sequence of  points $p_\nu\ne 0$ such that
$p_\nu$ is periodic with respect to $f$ and $p_\nu \to 0 $ as $\nu \to \infty$.
 \end{enumerate}
 Then $f$ has finite order, i.e., $f^k=Id$ for some $k \in \mathbb N \setminus \{0\}$.

\end{Lemma}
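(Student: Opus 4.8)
The plan is to exploit the fact that a holomorphic germ $f\in\Diff(\mathbb C,0)$ with $f'(0)=\lambda$ a primitive $q$-th root of unity is, by classical linearization theory, either \emph{periodic} (hence formally and analytically conjugate to the rotation $z\mapsto \lambda z$, so $f^q=\mathrm{Id}$) or \emph{tangent to a periodic map but not periodic}, in which case $f^q$ is tangent to the identity, $f^q(z)=z+a z^{k+1}+\hot$ with $a\ne 0$, for some $k\ge 1$. So the whole argument reduces to ruling out the second alternative, and the key mechanism is that a nontrivial parabolic germ $g(z)=z+az^{k+1}+\hot$ has \emph{no} periodic points other than $0$ in a punctured neighbourhood of the origin: its nonzero periodic points would be fixed points of some iterate $g^{\circ m}$, and since $g^{\circ m}(z)=z+ma z^{k+1}+\hot$ is again a nontrivial parabolic germ, the equation $g^{\circ m}(z)=z$ has $z=0$ as its only solution near $0$ (the other roots of $ma z^{k+1}+\hot=0$ are all $0$ with multiplicity $k$). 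First I would record this dichotomy precisely and isolate the claim ``a nontrivial parabolic germ has isolated fixed point set $\{0\}$ for every iterate,'' which is immediate from the Weierstrass preparation / order-of-vanishing count.

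Next I would assemble the contradiction. Suppose $f$ does not have finite order. Then by the dichotomy $N:=f^{\circ q}$ is a nontrivial parabolic germ, $N(z)=z+az^{k+1}+\hot$ with $a\ne 0$. A point $p\ne 0$ periodic for $f$ of period $\ell$ satisfies $f^{\circ \ell}(p)=p$; writing $\ell=qs+r$ with $0\le r<q$ and using that $f$ and $N$ commute, one sees $p$ is a periodic point of $N$ as well (indeed $N^{\circ \ell}(p)=(f^{\circ q\ell})(p)=(f^{\circ \ell})^{\circ q}(p)=p$, so $p$ is fixed by $N^{\circ \ell}$). By the previous paragraph, the fixed-point set of $N^{\circ \ell}$ near $0$ is $\{0\}$, so $p=0$, a contradiction once $p$ lies in the relevant neighbourhood. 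Since the hypothesis gives a sequence $p_\nu\ne 0$ of periodic points of $f$ with $p_\nu\to 0$, eventually $p_\nu$ lies in any prescribed neighbourhood of $0$ on which the representative of $N$ is defined, which yields the contradiction. Hence $f$ must be periodic, i.e.\ $f^{\circ k}=\mathrm{Id}$ for some $k$, and in fact one can take $k=q$.

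The one subtlety I would be careful about is the uniformity of neighbourhoods: the set of periodic points of $N$ (equivalently, the union over $m$ of the zero sets of $N^{\circ m}(z)-z$) is only locally finite in a fixed punctured disc, so I must first fix a representative of $N$ on a disc $D_\rho$ small enough that each $N^{\circ m}(z)-z$ has $z=0$ as its only zero in $D_\rho$ — this is possible because $N^{\circ m}(z)-z=m a z^{k+1}+\hot$ has a zero of order exactly $k+1$ at $0$, uniformly in $m$ after possibly shrinking once (the higher-order terms are dominated on a fixed small disc). Then any $p_\nu\in D_\rho$ periodic for $f$ forces $p_\nu=0$. I expect this bookkeeping — making the choice of disc independent of the period — to be the only real obstacle; the structural input (the periodic/parabolic dichotomy for roots-of-unity germs) is standard and can be cited, e.g.\ from Mattei–Moussu or from standard one-dimensional holomorphic dynamics. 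Finally I would note that this lemma is exactly the one-dimensional holonomy statement that will later be fed into the proof of Theorem~\ref{Theorem:real}, where the $p_\nu$ arise as intersections of the shrinking periodic orbits $\gamma_\nu$ with a transverse segment.
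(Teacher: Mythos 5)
Your overall strategy is the same as the paper's: reduce to the smallest iterate $N=f^{\circ q}$ tangent to the identity, observe that any $f$-periodic point is a fixed point of some iterate of $N$, and derive a contradiction from the fact that a nontrivial parabolic germ $N(z)=z+az^{k+1}+\hot$, $a\ne 0$, has no nonzero periodic points near $0$. The paper obtains this last fact by citing Camacho's topological classification of tangent-to-identity germs (the Leau--Fatou flower dynamics), whereas you try to replace that citation with an elementary order-of-vanishing count, and that is where your argument has a genuine gap.

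The problem is exactly the uniformity issue you flag and then dismiss. For each \emph{fixed} $m$, the identity $N^{\circ m}(z)-z=ma z^{k+1}+\hot$ does give a punctured disc $D_{\rho_m}\setminus\{0\}$ free of fixed points of $N^{\circ m}$; but the radius a priori depends on $m$, and the hypothesis does not bound the periods of the $p_\nu$, so you need a single $\rho>0$ working for all $m$. Your justification --- ``the higher-order terms are dominated on a fixed small disc'' --- is false: the coefficients of $N^{\circ m}$ beyond order $k+1$ grow with $m$ (already for $N(z)=z/(1-z)$ one has $N^{\circ m}(z)-z=mz^2(1+mz+m^2z^2+\cdots)$, whose tail is not small compared to $mz^2$ on any disc of radius independent of $m$), so $ma z^{k+1}$ does not dominate the remainder uniformly in $m$. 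The uniform statement you need \emph{is} true, but its proof requires the dynamical input (every nonzero point of a small punctured disc lies in an attracting or repelling petal, where the map is conjugate to $w\mapsto w+1$, hence has no periodic orbits); that is precisely Camacho's theorem, which is what the paper invokes. Your elementary count does suffice in the special case where the periods of the $p_\nu$ are bounded (pass to a subsequence of constant period $\ell$ and apply the identity principle to $N^{\circ\ell}(z)-z$), which happens to cover the application in Proposition~\ref{Proposition:orbitsperiodic}, but it does not prove the lemma as stated. The fix is simply to cite Camacho (or the Leau--Fatou flower theorem) at this step, as the paper does.
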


\begin{proof}
Since $f$ has a periodic linear part, there is a smaller positive integer $k \in \mathbb N\setminus \{0\}$ such that $f^k (z) = z + (...)$. If $f$ does not have finite order then we must have
$f^k(z) = z + a_{l+1} z^{l+1} + a_{l+2} z^{l+2}+...$ for some $a_{l+1}\ne 0$ and $l >0$. By replacing $f$ by  $f^k$ we may then assume that $f$ is tangent to the identity $f^\prime(0)=1$ but
$f\ne Id$. Using now Camacho theorem (\cite{camacho}) we conclude that no point $p\ne 0$ has a
periodic orbit, contradiction.
\end{proof}

An  extension of the above lemma, with a similar proof  is:

\begin{Lemma}
\label{Lemma:diffeomorphismclosed}
Let $f\in \Diff(\mathbb C,0)$ be a germ of a holomorphic diffeomorphism
such that:
\begin{enumerate}
\item $f^\prime(0)\in \mathbb C$ is a root of the unit.
\item There is a sequence of  points $p_\nu\ne 0$ such that the pseudo-orbit of
$p_\nu$ is finite with uniformly bounded order, with respect to $f$, and $p_\nu \to 0 $ as $\nu \to \infty$.
 \end{enumerate}
 Then $f$ has finite order, i.e., $f^k=Id$ for some $k \in \mathbb N \setminus \{0\}$.

\end{Lemma}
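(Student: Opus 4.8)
The plan is to reduce Lemma~\ref{Lemma:diffeomorphismclosed} to Lemma~\ref{Lemma:diffeomorphism} by showing that the hypothesis ``finite pseudo-orbit with uniformly bounded order'' already forces the accumulating points to be genuinely periodic. First I would fix a representative $f\colon U \to f(U)$ on a small disc $U = \mathbb D_r$ around $0\in\mathbb C$ and let $k_0\in\mathbb N$ be the uniform bound on the cardinality of the pseudo-orbits of the points $p_\nu$. As in the proof of Lemma~\ref{Lemma:diffeomorphism}, replacing $f$ by a suitable iterate $f^k$ (where $f'(0)$ is a primitive root of unity of order $k$), we may assume $f'(0)=1$; note this only changes the bound $k_0$ by a bounded factor and does not affect the accumulation $p_\nu\to 0$, so the hypotheses are preserved.

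The key step is then to argue by contradiction: suppose $f$ does not have finite order, so after the reduction $f(z) = z + a_{l+1}z^{l+1} + \cdots$ with $a_{l+1}\neq 0$ and $l\ge 1$, i.e. $f$ is a nontrivial tangent-to-the-identity germ. By Camacho's theorem (\cite{camacho}), such an $f$ is topologically conjugate near $0$ to the time-one map of a formal normal form, and in particular its dynamics on a punctured neighborhood of $0$ consists of $l$ attracting and $l$ repelling petals (the ``flower'' of Leau--Fatou), with every nonzero orbit tending to $0$ either in forward or in backward time and never returning. The crucial quantitative point I need is: for a point $q$ very close to $0$, the forward (or backward) pseudo-orbit of $q$ inside $U$ is \emph{long} — its cardinality tends to $\infty$ as $q\to 0$. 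Concretely, in a petal one has the Fatou coordinate $\Phi$ with $\Phi(f(z)) = \Phi(z)+1$, and $|\Phi(z)|\to\infty$ as $z\to 0$ within the petal; hence a point $p_\nu$ at distance $\sim\epsilon_\nu$ from $0$ has $|\Phi(p_\nu)|$ large, and the number of iterates $f^n(p_\nu)$ that remain in $U$ before leaving through the petal's mouth is comparable to $|\Phi(p_\nu)|$, hence $\to\infty$. This directly contradicts the uniform bound $k_0$ on the size of the pseudo-orbit of $p_\nu$, for $\nu$ large. Therefore $f$ has finite order.

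I expect the main obstacle to be making the ``the pseudo-orbit is long near the fixed point'' estimate rigorous and applicable to \emph{pseudo}-orbits (which may terminate when an iterate falls outside $U$) rather than to full orbits. One has to handle the two cases — $p_\nu$ lying in (or near) an attracting petal versus a repelling petal of $f^k$ — symmetrically, using $f^{-k}$ for the repelling case, and also control points that start near the separatrices between petals; the Leau--Fatou flower theorem and the boundedness of the Fatou coordinates' domains take care of this, but it requires some care with uniformity in $\nu$. An alternative, perhaps cleaner, route that avoids petal geometry: observe that a finite pseudo-orbit of bounded order $\le k_0$ whose points return to a definite neighborhood forces, after passing to $f^{k_0!}$, a fixed point arbitrarily close to $0$ other than $0$ itself — but an isolated-fixed-point / identity-theorem argument (a nontrivial holomorphic germ $f^{m}-\mathrm{id}$ has isolated zeros) shows $f^m=\mathrm{id}$ for the relevant $m$. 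Either way the conclusion is that $f$ has finite order, which is exactly the statement. I would present the petal-based argument as the main line since it parallels the proof of Lemma~\ref{Lemma:diffeomorphism} and reuses Camacho's theorem in the same manner.
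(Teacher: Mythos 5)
Your proposal is correct and follows essentially the same route as the paper: reduce to a nontrivial tangent-to-the-identity germ via a finite iterate and invoke Camacho's theorem (the Leau--Fatou flower) to show that pseudo-orbits of points approaching the origin become arbitrarily long, contradicting the uniform bound. The paper merely asserts this length estimate from the topological description of orbits, whereas you supply the Fatou-coordinate justification; the argument is otherwise identical.
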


\begin{proof}
As in the preceding proof we may assume that $f(z)= z + a_{k+1}z^{l+1} + \hot ,\, \,  a_{l+1} \ne 0$ and look for a contradiction. The topological description of the orbits of $f$ (\cite{camacho}) then shows that the closer to the origin, the more different iterates a point will have and, given any $k \in \mathbb N$, and no matter which neighborhood $0\in V \subset \mathbb C$ we choose, there is a small closed disk $0 \in D \Subset V$ centered at the origin, such that for every point $p \in D \setminus \{O\}$ the orbit of $p$ has order at least $k^2$.  This gives the desired contradiction.

\end{proof}

\section{Complexification of foliations, proof of  Theorems~\ref{Theorem:real} and ~\ref{Theorem:complex}}

Let $\fa$ be a real analytic codimension one foliation with singularities in a neighborhood of the origin $0 \in \mathbb R^n$. This means that $\fa$ is defined by a real analytic 1-form
$\omega=\sum\limits_{j=1}^n a_j(x)dx_j$, defined in a neighborhood of the origin, and satisfying the integrability condition $\omega \wedge d\omega=0$. We consider the complexification of $\fa$ which we denote by $\fa_{\mathbb C}$. This is a codimension one holomorphic foliation with singularity, defined in a neighborhood of the origin $O\in \mathbb C^n$ by the complexification $\omega_{\mathbb C}$ of the form $\omega$.
In complex coordinates $(z_1,\ldots,z_n)$ we can write $z_j=x_j + i y_j$ and $\omega_{\mathbb C}=
d(\sum\limits_{j=1}^n z_j ^2) + {\tilde \omega}_{\mathbb C}$ for some 1-form ${\tilde \omega}_{\mathbb C}$ with zero first jet at the origin. Now we consider the real space $\mathbb R^n \subset \mathbb C^n$ given by $y_j=0, j=1,\ldots,n$.

The next result is a well-known easy to prove lemma:
\begin{Lemma}
\label{Lemma:complexificationfirstintegral}
Let $\fa$ be a real analytic foliation in a neighborhood of the origin $0 \in \mathbb R^n$ whose complexification $\fa_{\mathbb C}$ admits a
holomorphic first integral. Then $\fa$ admits a real analytic first integral, defined in some neighborhood of the origin.
Indeed, there is a real analytic first integral $f$ for $\fa$ such that the complexification $f_{\mathbb C}$ of $f$ is a
holomorphic first integral for $\fa_{\mathbb C}$.
\end{Lemma}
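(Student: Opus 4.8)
The plan is to produce the real analytic first integral of $\fa$ directly, by restricting a holomorphic first integral of $\fa_{\bc}$ to $\re^n\subset\bc^n$ and taking its real and imaginary parts. Recall that $\fa$ is given by a real analytic integrable $1$-form $\omega=\sum_j a_j(x)\,dx_j$ and $\fa_{\bc}$ by its complexification $\omega_{\bc}=\sum_j a_j(z)\,dz_j$, whose pull-back to $\re^n=\{y_1=\cdots=y_n=0\}$ under the inclusion $\iota\colon\re^n\hookrightarrow\bc^n$ is exactly $\omega$.

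First I would fix a holomorphic first integral $F$ of $\fa_{\bc}$ on a neighborhood of $0\in\bc^n$, i.e.\ a non-constant holomorphic $F$ with $dF\wedge\omega_{\bc}=0$, and write $F\circ\iota=u+iv$ with $u,v$ real analytic real-valued functions on a neighborhood of $0\in\re^n$. Since pull-back commutes with $d$ and with $\wedge$, restricting the identity $dF\wedge\omega_{\bc}=0$ to $\re^n$ gives $(du+i\,dv)\wedge\omega=0$; as $\omega,du,dv$ have real coefficients, separating real and imaginary parts yields $du\wedge\omega=0$ and $dv\wedge\omega=0$. Thus each of $u,v$ is either constant or a real analytic first integral of $\fa$. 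They cannot both be constant: otherwise $F$ would be constant along $\re^n$, and since $\re^n$ is a maximal totally real submanifold of $\bc^n$, a holomorphic function is determined by its restriction to $\re^n$, forcing $F$ to be constant, a contradiction. Hence at least one of them, call it $f$, is a non-constant real analytic first integral of $\fa$ near the origin. (Alternatively, one could symmetrize $F$ under the involution $z\mapsto\bar z$, which preserves $\fa_{\bc}$ because $\omega$ has real coefficients; both routes are equivalent.)

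It remains to verify the last assertion, that the complexification $f_{\bc}$ is a holomorphic first integral of $\fa_{\bc}$. Since complexification commutes with $d$ and $\wedge$, the holomorphic $2$-form $df_{\bc}\wedge\omega_{\bc}$ is the complexification of the real analytic $2$-form $df\wedge\omega$, which vanishes identically by the previous step; hence its complexification vanishes on $\re^n$, and invoking once more the identity principle for holomorphic differential forms along the maximal totally real submanifold $\re^n$ we conclude $df_{\bc}\wedge\omega_{\bc}\equiv0$ near $0\in\bc^n$. Moreover $f_{\bc}$ is non-constant because $f$ is, so $f_{\bc}$ is indeed a holomorphic first integral of $\fa_{\bc}$, as required.

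The argument is essentially bookkeeping; the only point needing (standard) justification is the identity principle used twice above, namely that a germ of a holomorphic function, or of a holomorphic differential form, at $0\in\bc^n$ is determined by its restriction to $\re^n$. I do not anticipate any genuine obstacle here, which is consistent with the lemma being classified as well known.
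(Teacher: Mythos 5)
The paper offers no proof of this lemma at all---it is introduced only as ``a well-known easy to prove lemma''---so there is no argument of the authors' to compare yours against. Your proof is correct and is the standard one: restrict $dF\wedge\omega_{\mathbb C}=0$ to $\mathbb R^n$, split into real and imaginary parts to get $du\wedge\omega=dv\wedge\omega=0$, and use the identity principle along the maximal totally real submanifold $\mathbb R^n\subset\mathbb C^n$ both to rule out $u$ and $v$ being simultaneously constant and to pass from $df\wedge\omega\equiv 0$ to $df_{\mathbb C}\wedge\omega_{\mathbb C}\equiv 0$ (for this last step the second appeal to the identity principle is actually superfluous, since the complexification of the identically zero real analytic form is already the zero holomorphic form).
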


The main point is the following:

\begin{Proposition}
\label{Proposition:orbitsperiodic}
Let $X$ be a real analytic vector field in a neighborhood $U$ of the origin $O\in \mathbb R^2$, such that $X(0)=0$ and $DX(O)$ generates a rotation. Assume also that there exists a sequence of periodic  orbits $\gamma_\nu\subset U \setminus \{O\}$ of $X$ converging to $0$  (in the classical sense of Hausdorff topology). Then the origin is a center type singularity for $X$.

\end{Proposition}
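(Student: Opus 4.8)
The plan is to reduce the real statement to the one-variable dynamical statement of Lemma~\ref{Lemma:diffeomorphism} via complexification and a holonomy/first-return argument. First I would complexify: writing $X$ as dual to a real analytic $1$-form $\omega$ (or passing directly to $\omega_{\mathbb C}$ as in Section~3), we obtain a germ of a holomorphic foliation $\fa_{\mathbb C}$ at $O \in \mathbb C^2$ whose linear part, since $DX(O)$ generates a rotation with eigenvalues $\pm i$, is (after normalization) of the form $x\,dy - y\,dx$ up to higher order terms — i.e. a linearizable-candidate resonant Siegel-type singularity with eigenvalue ratio $-1$. I would then blow up the origin once: the exceptional divisor $D = \mathbb P^1$ is invariant, and the singularity being a $1{:}(-1)$ resonance produces (generically, and in any case after we exploit the hypotheses) exactly the configuration where the relevant object is the holonomy of the divisor, a germ $h \in \Diff(\mathbb C,0)$ whose linear part $h'(0)$ equals $e^{2\pi i (\lambda_1/\lambda_2)} = e^{-2\pi i} = 1$, or more robustly a root of unity once one checks the local model carefully.

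Next I would transport the sequence of periodic orbits $\gamma_\nu$ of the real vector field to periodic points of this holonomy map. The real plane $\mathbb R^2 = \{y_j = 0\}$ sits inside $\mathbb C^2$; a small transverse segment $\Sigma$ to $X$ at a nonsingular point lifts, after blow-up, to a holomorphic transversal $\Sigma_{\mathbb C}$ to $D$ on which the holonomy $h$ acts. Each real periodic orbit $\gamma_\nu$ meets $\Sigma$ in finitely many points and returns to itself, so its intersection point(s) with the transversal are periodic (or at least finite pseudo-orbits of uniformly bounded order) for the first-return map; since $\gamma_\nu \to O$, after blow-up these intersection points accumulate at the fixed point $0$ of $h$ on $\Sigma_{\mathbb C}$. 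This puts us exactly in the hypotheses of Lemma~\ref{Lemma:diffeomorphism} (or Lemma~\ref{Lemma:diffeomorphismclosed} if one only controls bounded order): a holomorphic germ with $h'(0)$ a root of unity and periodic points accumulating at the origin, hence $h$ has finite order, $h^k = \Id$.

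Finite-order holonomy of the exceptional divisor is the key: by Mattei--Moussu type arguments (the holonomy being periodic forces local analytic linearization and the existence of a first integral), $\fa_{\mathbb C}$ admits a holomorphic first integral near $O$, of Morse type $xy + \hot$ (two smooth transverse separatrices, the images of generic orbits through the periodic holonomy). Then Lemma~\ref{Lemma:complexificationfirstintegral} descends this to a real analytic first integral $f$ of $X$ near $O$, and a real analytic first integral with nonsingular quadratic-type level sets whose linear part generates a rotation forces the nearby real level curves to be closed ovals shrinking to $O$ — i.e. the origin is a center. I expect the main obstacle to be the bookkeeping at the blow-up: verifying that the $1{:}(-1)$ resonant singularity with the stated hypotheses genuinely reduces (is a "dicritical-free" reduced singularity whose separatrix-divisor holonomy governs everything), and that the real periodic orbits really do produce periodic — not merely recurrent — points of $h$ rather than getting lost among the four real directions transverse issues; handling the possibility that $h'(0)$ is a nontrivial root of unity rather than exactly $1$ is precisely why Lemma~\ref{Lemma:diffeomorphism} is stated in that generality, so that step should go through, but it needs to be said carefully.
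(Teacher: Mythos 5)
Your proposal follows essentially the same route as the paper: complexify to a $1{:}(-1)$ Siegel resonant foliation, blow up, transport the accumulating real periodic orbits to periodic points of the holonomy of the exceptional divisor, conclude the holonomy is periodic, invoke Mattei--Moussu for a holomorphic first integral, and descend via Lemma~\ref{Lemma:complexificationfirstintegral} to a Morse-type real first integral. The only cosmetic difference is that the paper makes the periodicity step through the Moebius band $\mathcal M^2$ (the lift of $\mathbb R^2$), where each $\gamma_\nu$ gives a period-two orbit of the real first-return map, and then uses the identity principle to pass to the full complex holonomy, whereas you apply Lemma~\ref{Lemma:diffeomorphism} directly (noting correctly that the linear part of the holonomy is a root of unity, in fact $-1$); both mechanisms are in the paper and the argument is the same in substance.
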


\begin{proof}The complexification $X_{\mathbb C}$ of $X$ is a complex analytic vector field defined in a neighborhood of the origin $O\in \mathbb C^2$. As we have remarked before, we may assume that the linear part $DX_{\mathbb C}(0)$ has  eigenvalues given by $\pm i$.
 We may therefore find suitable complex coordinates $(x,y)\in V \subset \mathbb C^2$ such that
  $X_{\mathbb C}= x \frac{\partial}{\partial x} -y \frac{\partial}{\partial y} + X_2$ where $X_2$ has a zero order one jet at the origin.
Then $X_{\mathbb C}$ generates a holomorphic foliation $\fa_{\mathbb C}$ with an isolated Siegel type singularity at the origin, of the form $ydx + xdy + \ldots=0$. The germ of foliation $\fa_{\mathbb C}$ is in the Siegel domain and we may assume that the coordinate axes are invariant (\cite{mattei-moussu}). In this case the quadratic  blow-up $\pi \colon \tilde {\mathbb C^2} \to \mathbb C^2$ at the origin $O\in \mathbb C^2$,  induces a foliation
$\tilde {\fa}:=(\fa_{\mathbb C})^*$ in a neighborhood of the exceptional divisor $\pi^{-1}(0)= E$ in the blow-up space $\widetilde{\mathbb C^2 _0}$. The foliation $\tilde \fa$  leaves invariant the exceptional divisor $E\simeq \mathbb CP(1)$ and has exactly two singularities, the north and south poles, in $E$, both of Siegel resonant type (indeed, these singularities and the structure of the restriction of $\tilde \fa$ to $E$ are determined by the linear part $nydx + mxdy=0$ of $\fa$).
Let us study this a bit more thoroughly.
Given complex coordinates $(x,y)\in \mathbb C^2$, we consider the {\it real plane}
$\mathbb R^2 \subset \mathbb C^2$ as given by $\Im(x)=\Im(y)=0$. The inverse image of the real plane in the blow-up $\tilde{\mathbb C^2 _0}$ corresponds to a Moebius band ${\mathcal M}^2\subset \tilde{\mathbb C^2}$, intersecting the exceptional divisor transversely at the equator $\mathcal E$  of the exceptional divisor $\mathbb E\simeq \mathbb CP(1)$. The pull-back foliation $\fa^*$
in $\tilde{\mathbb C^2 _0}$  leaves invariant this Moebius band.
Now we consider the projective holonomy group of the exceptional divisor $E$. This means the {\it holonomy group} (\cite{C-LN})  of the leaf $E\setminus \sing(\fa^*)$ for the foliation $\fa^*$. From what we have seen above, this foliation has exactly two singularities in $E$, corresponding to the north and south poles of $E$. Thus the holonomy group above mentioned is generated by a simple loop around the equator, i.e, this is a cyclic group.
Let us denote by $h$ a generator of this group obtained as follows. Choose a point $p\in E$ and a local transverse disc $\Sigma$ to $\mathbb E$ centered at $p$. Then denote by $H\colon (\Sigma,p) \to (\Sigma,p)$ the holonomy map corresponding to
the equator $\gamma={\mathcal M}^2 \cap \mathbb E$. Notice that, since $\mathbb E$ is invariant by $\fa^*$, the equator $\mathcal E$ corresponds to a  compact leaf (periodic orbit) of the induced foliation in ${\mathcal M}^2$. A transverse open segment $\sigma \simeq (-1,1)\subset \mathcal M^2$, transverse to $\mathcal E$ at a base point $p\in \mathcal E$ will then induce  an associate first return map (Poincaré map) $h\colon (\sigma,p) \to (\sigma,p)$ corresponding to the periodic orbit
$\mathcal E$. Moreover, each closed (i.e., periodic) orbit $\gamma$ of $X$ in $\mathbb R^2$ lifts into a closed compact curve $\tilde \gamma $ in the Moebius band ${\mathcal M}^2$. This closed curve $\tilde \gamma$ then corresponds to a periodic orbit for the holonomy map $h$, this orbit consisting of only two elements, except for the case of $\gamma = \mathcal E$ which gives the fixed point of $h$.
Thus we have a germ of a real  analytic map   $h\in \Diff^w(\mathcal E,p)\simeq \Diff^w(\mathbb R,0)$. This map has a sequence of periodic points (of period 2) accumulating at the origin. By the identity principle, $h$ must be periodic of period $2$, i.e., $h^2 = Id$. This already  implies that the $\fa^*$-holonomy map $H$ admits a real analytic curve $\gamma \cap \Sigma$ where its orbits are periodic of period $\leq 2$. Since $\gamma$ contains the origin,  $H$ is a periodic map
of period $2$. This implies, by standard methods described in \cite{moussu} and from Mattei-Moussu's theorem (\cite{mattei-moussu} page 473) the foliation $\fa_\mathbb C$ admits a holomorphic first integral in a neighborhood of the origin $O\in \mathbb C^2$. From Lemma~\ref{Lemma:complexificationfirstintegral} we conclude that the vector field $X$ admits an analytic first integral. Let us denote by $f\colon U,0 \to \mathbb R,0$ an analytic first integral of $X$. This means that $X(f)=0$, i.e., $f$ is constant on each orbit of $X$ in $V$.
Thanks to the linear part of $X$ we may assume that $f(x_1,x_2) = x_1 ^2 + x_2 ^2 +$higher order terms and thanks to Morse lemma
we conclude that the origin is a center singularity for $X$.

\end{proof}

\begin{proof}[Proof of Theorem~\ref{Theorem:real}]
The main point is $(ii) \implies (i)$.
According to Lemma~\ref{Proposition:orbitsperiodic} the origin is a center singularity. Evoking then Lyapunov-Poincaré theorem (Theorem~\ref{Theorem:centertheorem}) we conclude that $\fa$ admits a real analytic first integral.
\end{proof}

\begin{proof}[Proof of Theorem~\ref{Theorem:boundedorder}]
We proceed similarly to Theorem~\ref{Theorem:real}. Indeed, Lemma~\ref{Lemma:diffeomorphism} is now replaced by Lemma~\ref{Lemma:diffeomorphismclosed} in order to, with the use of an easy adaptation of  Proposition~\ref{Proposition:orbitsperiodic},  conclude that the singularity is a center. The remaining part follows as usual.
\end{proof}

\section{Holomorphic foliations: proof of Theorems~\ref{Theorem:complex} and ~\ref{Theorem:complete}}
\label{section:holomorphic}

Let us now prove Theorem~\ref{Theorem:complex}.

\begin{proof}[Proof of Theorem~\ref{Theorem:complex}]
We proceed as in \cite{leon-scardua}. Assume that $\fa$ admits a holomorphic first integral of the form $fg$ with $f, g \in \mathcal O_2$, $f(0)=g(0)=0$,
$f$ and $g$ (being germs reduced and  irreducible and)
in general position. We consider the analytic varieties of real codimension one $\mathcal  R: (\Re f = \Re g)\subset \mathbb R^4$ and $\mathcal I : (\mI f= - \mI g)\subset \mathbb R^4$. Since $f$ and $g$ are in general position the intersection
$\mathcal R \cap \mathcal I =V^2$ is a two-dimensional analytic variety. Also $0 \in V^2$ because $f$ and $g$ vanish at the origin. Let us now put $X=\frac{f+ g}{2}$ and $Y=\frac{f - g }{2 i}$. Then $f= X + iY$ and $g = X - i Y$ and therefore
$fg=X^2 + Y^2$. Moreover, in the variety $V^2$ we have $X=\Re(f)=\Re(g)$ and $Y=\mI(f)=-\mI(g)$ so that, restricted to $V^2$ we have
$fg=||f||^2=||g||^2$. This shows that the restriction to $V^2$ of the foliation  $\fa$ is a real analytic foliation by curves which are closed. In particular, the contact order of $\fa$ with $V^2$ is one. Indeed the restriction $\fa\big|_{V^2}$  gives an analytic center type singularity at the origin $0\in V^2$. Finally, since $\fa$ is holomorphic and has contact order equal to one with $V^2$ it follows that  $V^2$ is a totally real subvariety.
This proves $(i) \implies (ii)$ in Theorem~\ref{Theorem:complex}.

Let us now prove $(ii) \implies (i)$. From hypothesis (ii) and from the considerations after Theorem~\ref{Theorem:complex}:
{\em there is   a germ of a totally real analytic variety $V^2\subset \mathbb C^2$ having contact order one with $\fa$ and such that the restriction of $\fa$ to $V^2$  has a nondegenerate linear part  singularity at the origin  in $V^2$ which is the limit $p=\lim_{\nu \to \infty} L_\nu$  of a sequence of compact leaves $L_\nu$ of the restriction $\fa\big|_V$.}
Up to an analytic change of coordinates in $\mathbb C^2$ we may assume that $V^2\subset \mathbb C^2$ corresponds to the totally real space $\mathbb R^2 \subset \mathbb C^2$, i.e., in suitable local coordinates $(x,y)\in \mathbb C^2$ we have $V^2: (\mI (x) = \mI (y)=0)$.
By hypothesis the holomorphic foliation $\fa$ is defined in a neighborhood of the origin $O\in \mathbb C^2$ by a 1-form $\omega=d(xy) + {\tilde \omega}$ where ${\tilde \omega}$ has zero jet of order one at the origin.

Now we claim:
\begin{Claim}
In these coordinates $(x,y)$, $\fa$ is the complexification of a real analytic foliation $\fa_{\mathbb R}$ which has a center type singularity at the origin $O\in \mathbb R^2$.
\end{Claim}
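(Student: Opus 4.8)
The plan is to prove the Claim in two stages: Stage 1, that in the chosen coordinates $(x,y)$ (in which $V^2=\{\mI(x)=\mI(y)=0\}$) the holomorphic foliation $\fa$ is the complexification of the real analytic foliation $\fa_{\mathbb R}:=\fa\big|_{V^2}$ — the restriction being, by hypothesis, a real analytic foliation with an isolated nondegenerate singularity at $O$ accumulated by compact leaves; and Stage 2, that $\fa_{\mathbb R}$ has a center type singularity at $O$.

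For Stage 1, write $\fa$ near $O$ as $\omega=A\,dx+B\,dy$ with $A,B$ holomorphic and $\sing(\fa)=\{O\}$. At a real point $p\in V^2\setminus\{O\}$ (so $T_pV^2\cong\mathbb R^2$), a real vector $v\in T_pV^2$ lies in $T_p\fa$ iff $v_1A(p)+v_2B(p)=0$ in $\mathbb C$, i.e. iff $v$ is annihilated by both real $1$-forms $\Re(A)\,dx+\Re(B)\,dy$ and $\mI(A)\,dx+\mI(B)\,dy$ at $p$. The contact-order-one hypothesis says precisely that these two real $1$-forms are pointwise proportional (and not both zero) along $V^2\setminus\{O\}$, so that $T_p\fa\cap T_pV^2$ is a real line $\ell_p=T_p\fa_{\mathbb R}$; and since $T_p\fa$ is a \emph{complex} line of $\mathbb C^2$ containing the nonzero real vector spanning $\ell_p$, necessarily $T_p\fa=\mathbb C\cdot\ell_p$, the complexification of $T_p\fa_{\mathbb R}$. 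Hence $\fa$ and the complexification $(\fa_{\mathbb R})_{\mathbb C}$ share the same tangent line at every point of $V^2\setminus\{O\}$. Writing $(\fa_{\mathbb R})_{\mathbb C}$ as $\omega'=0$ for a holomorphic $1$-form $\omega'$, the holomorphic function $h$ with $\omega\wedge\omega'=h\,dx\wedge dy$ vanishes on the totally real plane $V^2$; as all mixed partial derivatives of $h$ at $O$ then vanish, $h\equiv 0$ near $O$, so $\fa=(\fa_{\mathbb R})_{\mathbb C}$. This proves the first assertion of the Claim.

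For Stage 2, let $X$ be a real analytic vector field generating $\fa_{\mathbb R}$ near $O$ (dual to a real analytic $1$-form defining $\fa_{\mathbb R}$), so that $X(O)=0$ and $DX(O)$ is nonsingular by nondegeneracy, while $O$ is the limit of a sequence of periodic orbits of $X$, namely the compact leaves of $\fa_{\mathbb R}$. A nondegenerate planar singularity accumulated by periodic orbits cannot be hyperbolic — near a node, focus or saddle no periodic orbit converges to the singular point — so the eigenvalues of $DX(O)$ are purely imaginary and nonzero, and after multiplying $X$ by a real constant we may assume $DX(O)$ generates a rotation. Proposition~\ref{Proposition:orbitsperiodic} now applies to $X$ verbatim and yields that $O$ is a center type singularity for $X$, hence for $\fa_{\mathbb R}$. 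Together with Stage 1 this proves the Claim. (Alternatively, one may rerun directly on $\fa$ the blow-up and projective-holonomy argument used to prove Proposition~\ref{Proposition:orbitsperiodic}.)

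The step I expect to be the main obstacle is the implication ``same tangent line all along the totally real $V^2$'' $\Rightarrow$ ``equal holomorphic foliations'': this is where the totally real (maximally real) nature of $V^2$ and the identity principle do the real work, and where one must check that the comparison object $(\fa_{\mathbb R})_{\mathbb C}$ is a genuine germ of holomorphic foliation with isolated singularity (which it is, thanks to nondegeneracy of $\fa_{\mathbb R}$). The remaining points are routine: bookkeeping about the normalization (that the coordinate change bringing $V^2$ to $\mathbb R^2$ keeps $\tilde\omega$ with vanishing $1$-jet) and a direct appeal to Proposition~\ref{Proposition:orbitsperiodic}. Once the Claim is in hand, Theorem~\ref{Theorem:complex}\,(i) follows by complexifying the Morse first integral of $\fa_{\mathbb R}$, exactly as in the proof of Theorem~\ref{Theorem:real}, the resulting holomorphic Morse first integral factoring as $fg$ with $f,g$ in general position.
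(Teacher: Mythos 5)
Your proof is correct and its endgame coincides with the paper's: both reduce the Claim to Proposition~\ref{Proposition:orbitsperiodic} applied to the restricted real foliation $\fa_{\mathbb R}=\fa\big|_{V^2}$, whose compact leaves accumulating at $O$ serve as the required periodic orbits. The difference is one of completeness rather than of route: the paper's own proof of the Claim is essentially a restatement of the hypotheses followed by the citation of that Proposition, with no argument for the first assertion of the Claim (that $\fa$ \emph{is} the complexification of $\fa_{\mathbb R}$) and no verification that $DX(O)$ generates a rotation. Your Stage~1 --- contact order one forces $T_p\fa=\mathbb C\cdot T_p\fa_{\mathbb R}$ along $V^2\setminus\{O\}$, and a holomorphic function vanishing on a maximal totally real submanifold vanishes identically, so $\omega\wedge\omega'\equiv 0$ --- and your Stage~2 exclusion of hyperbolic linear parts (no periodic orbits accumulate at a hyperbolic singularity, and a saddle is ruled out by index) are precisely the two checks the paper leaves implicit, and both are sound. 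Your handling of the normalization is in fact more careful than the paper's: after the coordinate change taking $V^2$ to $\{\mI(x)=\mI(y)=0\}$ the paper still writes the linear part as $d(xy)$, whose literal restriction to that real plane is a saddle; your eigenvalue argument, which derives the rotation hypothesis from the accumulation of periodic orbits rather than from the written normal form, avoids this inconsistency.
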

\begin{proof}[Claim]
 $\fa$ has contact order one with  the real space $\mathbb R^2 \subset \mathbb C^2$ and its restriction to this space exhibits a nondegenerate linear part  singularity at the origin $O\in \mathbb R^2$. Recall that the real space above is given by $\mI(x)=\mI(y)=0$ where $(x,y)\in \mathbb C^2$ are affine
coordinates in $\mathbb C^2$. Moreover, still by hypothesis, the foliation  $\fa^*$ in ${\mathcal M}^2$ admits a sequence of leaves $L_\nu, \nu \in \mathbb N$ such that each $L_\nu$ is  compact and $L_\nu \to \gamma$ as $\nu \to \infty$. By Proposition~\ref{Proposition:orbitsperiodic} this real foliation has a center singularity at the origin, this singularity admitting an analytic first integral. Also the complex foliation $\fa$ admits a holomorphic first integral.
\end{proof}
As we have seen above, $\fa$ admits a holomorphic first integral.
 It is not difficult to use the linear part of $\fa$ to conclude that  this first integral is of the form $fg$ where $f, g \in \mathcal O_2$ are irreducible and reduced and, up to reordering $f$ and $g$, we must have  $x\big|f$ and $y\big| g$ in $\mathcal O_2$.
 \end{proof}

\begin{proof}[Proof of Theorem~\ref{Theorem:complete}]
First we recall that the main reference for holomorphic flows on Stein spaces is the work of
M. Suzuki (\cite{Suzuki1, Suzuki2}). In particular, from Suzuki's work we know that there is a {\it typical orbit} for the vector field in the following sense: there is a zero logarithmic capacity subset $\sigma \subset N$ such that $\sigma$ is invariant, and every orbit of $Z$ off $\sigma$ is diffeomorphic to $R$ where $R$ is a Riemann surface belonging to the following list $\{\mathbb C, \mathbb C^*\}$.
Moreover, still according to Suzuki, in case the typical orbit is diffeomorphic to $\mathbb C^*$ the foliation $\fa(Z)$ admits a meromorphic first integral $f\colon N^2 \to  {\ov{\mathbb C}}$.

Now we observe that, from our hypotheses and from the proof of Theorem~\ref{Theorem:complex}, the real model induces a complexification that admits a holomorphic first integral in  a neighborhood of $p$ which is a singularity of the form $xdy + ydx + \hot =0$ in suitable holomorphic coordinates
$(x,y)\in U \subset N^2$, centered at $p$. This singularity admits a holomorphic first integral and it is therefore analytically linearizable. This implies by \cite{camacho-scarduamanuscripta} that the flow is periodic, having therefore typical orbit diffeomorphic to $\mathbb C^*$ and, by Suzuki, it admits a meromorphic first integral $f \colon N^2 \to {\ov{\mathbb C}}$.

\end{proof}

\bibliographystyle{amsalpha}

\end{document}